\numberwithin{equation}{section}
\numberwithin{figure}{section}
\newtheorem{theorem}{Theorem}
\newtheorem{remark}{Remark}
\newcommand{\R}{\mathbb{R}}
\newcommand{\N}{\mathbb{N}}
\newcommand{\Om}{\Omega}
\newcommand{\pt}{\partial}
\DeclareMathOperator{\sign}{sign}
\numberwithin{equation}{section}
\numberwithin{theorem}{section}
\numberwithin{proposition}{section}
\numberwithin{definition}{section}
\numberwithin{corollary}{section}
\numberwithin{lemma}{section}
\numberwithin{example}{section}
\begin{document}

      \title{On some Liouville  theorems for $p$-Laplace type operators}
      
       \author{Michel Chipot \footnote{Institute of Mathematics,
           University of Z\"urich, Winterthurerstr.190, CH-8057 Z\"
           urich, email : m.m.chipot@math.uzh.ch}~{}~{}\  and Daniel
         Hauer  \footnote{Brandenburg University of Technology
           Cottbus-Senftenberg, Faculty 1 - Section Analysis, Platz der Deutschen Einheit 1,03046 Cottbus, Germany,}~{}\footnote{School of Mathematics and Statistics, The
  University of Sydney, Sydney, NSW, 2006, Australia,
email : daniel.hauer@b-tu.de, daniel.hauer@sydney.edu.au}}

 \maketitle 
      \begin{abstract} 
  The goal of this note is to consider Liouville type theorem for
  $p$-Laplacian type operators. In particular guided by the Laplacian
  case one establishes analogous results for the $p$-Laplacian and
  operators of this type.
\end{abstract}
\vskip.5 cm
~~~~~~~~~~~~~~~~~~~~~~~~~~~~~~~~~~~~~~{\bf To Tom Sideris, an elegant scholar}
\vskip.5 cm
\noindent
{\bf MSC2020-Mathematics Subject Classification: 35A01, 35B53, 35D30, 35F25.\\[0.2cm]}

\noindent{\bf Key words:} p-Laplace operator, Liouville theorem,
Schr\"odinger equation, nonlinear operators, anisotropic Laplace
operator, double phase problem.

\section{Introduction and notation} 
It is well known, and it goes back to Liouville, that if $u$ is an
harmonic, bounded function in $ \R^n$ then $u$ has to be a constant,
i.e. if
\begin{displaymath}
-\Delta u = 0 ~~\text{ in }~~ {\cal D}' ({\R}^n)
\end{displaymath}
and $u$ is bounded, then $u$ is constant (see for instance \cite{E},
\cite{PW1}). The problem is much more saddle when the equation above has
a lower order term, i.e. if $u$ is a solution to the Schr\"odinger
equation
\begin{equation}\label{1.2}
-\Delta u + bu = 0 ~~\text{ in }~~ {\cal D}' ({\R}^n)
\end{equation}
for some function $b\geq 0$. If $n=2$, $b\not = 0$ then every bounded
solution to \eqref{1.2} is equal to $0$. The situation is radically
different when $n >2$. To sketch the situation, if $b$ is not decaying
too quickly at infinity, then bounded solutions to \eqref{1.2} are
vanishing. On the contrary for $b$'s with fast decay \eqref{1.2} can
have bounded non trivial solution (see \cite{BCY}, \cite{Gri1},
\cite{Gri2}, \cite{Pin2}).\medskip 

The goal of this note is to investigate the situation when the Laplacian
is replaced by the $p$-Laplacian. The expectation in this case is that
for $p \geq n$ every bounded solution to
\begin{displaymath}
-\Delta_p u = 0 ~~\text{ in }~~ {\cal D}' ({\R}^n)
\end{displaymath}
has to be constant but when $p <n$ and $b$ decays fast enough one can
exhibit nontrivial bounded solutions to
\begin{displaymath}
-\Delta_p u + b|u|^{p-2}u = 0 ~~\text{ in }~~ {\cal D}' ({\R}^n).
\end{displaymath}
This is what we would like to investigate in a slightly more general
framework. Recall that the $p$-Laplacian is defined as
\begin{displaymath}
\Delta_p u  := \pt_{x_i} \{|\nabla u|^{p-2}\pt_{x_i} u \} = \nabla \cdot \{|\nabla u|^{p-2} \nabla u\}
\end{displaymath}
with the summation convention in $i$, i.e. in the above formula one sums
in $i$ for $i=1,\cdots, n$. We will address these issues for
$p$-Laplacian type operator which archetype could be
\begin{displaymath}
 -\nabla \cdot \{a(x,u)|\nabla u|^{p-2} \nabla u\}.
\end{displaymath}
But we also discuss cases for sums of $p$-Laplace operators
\begin{displaymath}
  \pt_{x_k}\big(\sum_{i=1}^N a_i(x,u)|\nabla u |^{p_i-2}\pt_{x_k}u
\big),
\end{displaymath}
which includes the prototype operator involved in double phase problems
(see, for example, \cite{MR3775180} and references therein). 

The paper is divided as follows. The two next sections provide Liouville
type results in different situations getting in particular inspiration
from the case of the Laplacian where $b$ is chosen with a relatively
slow decay at infinity. In the Section~4 we give an example of a
nontrivial bounded solution when the lower order term of the operator
vanishes at infinity. Finally, in the last section, we briefly explain
how the arguments developped in Theorem 3.1 can be extended in the case
of several operators.

For interesting related topic one refers to 
\cite{QS}, \cite{DDH}, \cite{C5}, \cite{Mei}, \cite{PS}, \cite{Du}, \cite{PTT}.

\section{$p$-Laplacian type operators  $``p\geq n"$} 

Let us denote by $a_i(x,u)$, $i=1,\cdots, N$ Carath\'eodory functions
such that for some positive constants $\lambda, \Lambda$ one has for
$i=1,\cdots, N$
\begin{displaymath}
\lambda \leq a_i(x,u) \leq \Lambda ~~\text{ a.e. } x \in \R^n,~\forall u \in \R.
\end{displaymath}
Denote also by $b(x,u)$ a bounded Carath\'eodory function satisfying 
\begin{equation}\label{2.2}
b(x,u)u \geq 0 ~~\text{ a.e. } x \in \R^n,~\forall u \in \R.
\end{equation}
Let $p_1, \cdots, p_N$ be real numbers such that 
\begin{displaymath}
1<p_1\leq p_2 \leq \cdots \leq p_N.
\end{displaymath}
Suppose now that $u$ is a \emph{solution} to 
\begin{equation}\label{2.4}
-\pt_{x_k}\big(\sum_{i=1}^N a_i(x,u)|\nabla u |^{p_i-2}\pt_{x_k}u
\big)+ b(x,u) =0\qquad\text{in $\mathcal{D}'(\R^n)$,}
\end{equation}
i.e. $u \in W^{1,p_N}_{\ell oc}(\R^n)$ and for every bounded open
subset $\Om$ of $\R^n$
\begin{equation}\label{2.5}
\int_\Om \sum_{i=1}^N a_i(x,u)|\nabla u |^{p_i-2}\nabla u \cdot \nabla
v+ b(x,u)v =0 \qquad\forall v \in W^{1,p_N}_0(\Om).
\end{equation}

Then, one can show :
\begin{theorem} 
  \label{thm:1}
  Suppose that $p_i \geq n$, for all $i=1,\cdots, N$. Then
  the only bounded solutions to \eqref{2.4} are the constants.
\end{theorem}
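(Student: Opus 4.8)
The plan is to derive a Caccioppoli-type estimate by testing the weak formulation \eqref{2.5} against $v = u\,\xi^{p_N}$, where $\xi$ is a Lipschitz cutoff supported in a large ball, and then to let the support expand to all of $\R^n$. Since $u \in W^{1,p_N}_{\ell oc}(\R^n)$ is bounded and $\xi$ has compact support, $v$ is admissible in $W^{1,p_N}_0(\Om)$ for any ball $\Om$ containing the support of $\xi$. The sign condition \eqref{2.2} gives $\int b(x,u)\,u\,\xi^{p_N}\geq 0$, so this lower-order term may simply be discarded; expanding $\nabla v = \xi^{p_N}\nabla u + p_N\,u\,\xi^{p_N-1}\nabla\xi$ then leaves
\begin{equation*}
\int \sum_{i=1}^N a_i(x,u)|\nabla u|^{p_i}\xi^{p_N} \leq p_N\int \sum_{i=1}^N a_i(x,u)\,|\nabla u|^{p_i-1}\,|\nabla\xi|\,|u|\,\xi^{p_N-1}.
\end{equation*}

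First I would bound the right-hand side using $a_i \leq \Lambda$ and $|u|\leq \|u\|_\infty$ (here the boundedness hypothesis is essential), and then apply Young's inequality to each summand with the conjugate exponents $p_i/(p_i-1)$ and $p_i$, splitting $\xi^{p_N-1} = \xi^{p_N(p_i-1)/p_i}\,\xi^{p_N/p_i-1}$ so that the gradient part recombines into $|\nabla u|^{p_i}\xi^{p_N}$. The power $\beta=p_N$ on the cutoff is chosen precisely so that the leftover exponent $p_N-p_i\geq 0$ is nonnegative for every $i$. This gives, for any $\varepsilon>0$,
\begin{equation*}
\int \sum_{i=1}^N a_i|\nabla u|^{p_i}\xi^{p_N} \leq \varepsilon\, C\sum_{i=1}^N\int|\nabla u|^{p_i}\xi^{p_N} + C_\varepsilon\sum_{i=1}^N\int \xi^{p_N-p_i}|\nabla\xi|^{p_i}.
\end{equation*}
Using the uniform lower bound $a_i\geq\lambda$ to absorb the first term into the left-hand side for $\varepsilon$ small, and recalling $\xi\leq 1$, I obtain the clean estimate $\sum_i\int|\nabla u|^{p_i}\xi^{p_N}\leq C\sum_i\int|\nabla\xi|^{p_i}$ with $C$ independent of the cutoff.

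The decisive step, and the place where the hypothesis $p_i\geq n$ enters, is to choose $\xi=\xi_R$ so that the right-hand side tends to $0$ as the ball exhausts $\R^n$. I would take the logarithmic cutoff equal to $1$ on $B_R$, equal to $0$ outside $B_{R^2}$, and equal to $\log(R^2/|x|)/\log R$ on the annulus, so that $|\nabla\xi_R| = (|x|\log R)^{-1}$ there. A direct computation in polar coordinates gives $\int|\nabla\xi_R|^{p_i} = c_n(\log R)^{-p_i}\int_R^{R^2}r^{n-1-p_i}\,dr$, which for $p_i=n$ equals $c_n(\log R)^{1-n}\to 0$ and for $p_i>n$ is $O((\log R)^{-p_i})\to 0$; the borderline exponent $p_i=n$ is exactly the case that forces the logarithmic (rather than a linear) profile, and this is the main obstacle to watch. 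Since $\xi_R=1$ on $B_R$, monotone convergence then forces $\int_{\R^n}|\nabla u|^{p_i}=0$ for each $i$, whence $\nabla u=0$ a.e. and $u$ is constant.
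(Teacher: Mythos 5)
Your proof is correct, but it diverges from the paper's at the decisive step, so the two are worth comparing. Both arguments begin identically: test \eqref{2.5} with $u$ times a power of a cutoff, discard the lower-order term using the sign condition \eqref{2.2}, and run H\"older/Young so that the chosen power of the cutoff (your $p_N$, the paper's $p\geq p_N$) makes the leftover exponents $p_N-p_i$ nonnegative, yielding a Caccioppoli-type inequality. The difference is how the right-hand side is then killed. The paper uses the standard scaled cutoff $\rho(\frac{x}{r})$ with $|\nabla \{\rho(\frac{x}{r})\}|\leq K/r$, for which the right-hand side is of order $\sum_i r^{n-p_i}\|u\|_\infty^{p_i}$ --- bounded, but \emph{not} vanishing in the borderline case $p_i=n$. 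It therefore needs a second, hole-filling-type argument: boundedness and monotonicity of $r\mapsto\int_{B_r}|\nabla u|^{p_i}$ give existence of the limit, hence $\int_{B_r\backslash B_{\frac{r}{2}}}|\nabla u|^{p_i}\to 0$, and re-inserting this into the H\"older estimate \eqref{2.13} forces $\nabla u=0$. You instead change the cutoff: the logarithmic profile on $B_{R^2}\backslash B_R$ makes $\int|\nabla \xi_R|^{p_i}$ itself tend to zero even when $p_i=n$ (it is $O((\log R)^{1-n})$ there, and $n\geq 2$ in that case since $p_i>1$ rules out $p_i=n=1$), so the conclusion follows in one pass from the Caccioppoli estimate and monotonicity, with no need for the limit-existence trick. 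What you gain is directness; what the paper's route buys is generality: its argument only uses that $\sum_i r^{-p_i}\int_{B_r\backslash B_{\frac{r}{2}}}|u|^{p_i}$ stays bounded in $r$ (condition \eqref{2.16}, strictly weaker than boundedness of $u$, as the paper's Remark 2.1 points out), whereas your Young step invokes $\|u\|_\infty$ pointwise against the weight $|\nabla\xi_R|^{p_i}$, which does not obviously relax to a condition of the form \eqref{2.16}.
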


\begin{proof} Set 
\begin{displaymath}
  A(x,u(x),\xi) = \sum_{i=1}^N a_i(x,u(x))|\xi |^{p_i-2}\xi
\end{displaymath}
for a.e. $x\in \Om$, and every $\xi\in \R^n$. One has, if we denote by a dot the scalar product
\begin{equation}\label{2.7}
A(x,u(x),\xi) \cdot \xi \geq \lambda \sum_{i=1}^N |\xi |^{p_i},
\end{equation}
and
\begin{equation}\label{2.8}
|A(x,u(x),\xi) | \leq \Lambda \sum_{i=1}^N |\xi |^{p_i-1}
\end{equation}
for a.e. $x\in \Om$, and every $\xi\in \R^n$. Let us denote by $\rho$ a
smooth function such that
\begin{equation}\label{2.9}
\rho = 1 \text{ on } B_{\frac{1}{2}}, ~~\rho = 0 \text{ outside } B_1, ~~|\nabla \rho | \leq K
\end{equation}
for some constant $K$ ($B_r$ denote the ball of center $0$ and radius
$r$). If $u$ is a weak solution to \eqref{2.4} and if $p \geq p_N$, then
one has that
\begin{displaymath}
v:=u\, \rho^p\left(\frac{\cdot}{r}\right) \in W^{1,p_N}_0(B_r).
\end{displaymath}
Thus from \eqref{2.5} one derives dropping the measures of integration
\begin{displaymath}
\int_{B_r} A(x,u(x), \nabla u(x))\cdot \nabla \{u \rho^p(\frac{x}{r})\}
+ b(x,u(x))u(x) \rho^p(\frac{x}{r}) = 0, 
\end{displaymath}
which is equivalent to
\begin{multline*}
 \int_{B_r} A(x,u(x), \nabla u(x))\cdot \nabla u ~\rho^p(\frac{x}{r})+ b(x,u(x))u(x) \rho^p(\frac{x}{r})\\
=- p \int_{B_r\backslash B_\frac{r}{2}}
A(x,u(x), \nabla u(x))\cdot \nabla\{ \rho (\frac{x}{r})\}  \rho^{p-1} (\frac{x}{r})u. 
\end{multline*}
Using \eqref{2.7}-\eqref{2.9}, recalling that $ \nabla\{ \rho
(\frac{x}{r})\} = \frac{1}{r} \nabla\rho (\frac{x}{r})$ we get by
\eqref{2.2} that
\begin{align*}
\lambda \int_{B_r} \sum_{i=1}^N |\nabla u|^{p_i} ~\rho^p(\frac{x}{r})&\leq \frac{pK\Lambda}{r} \sum_{i=1}^N \int_{B_r\backslash B_\frac{r}{2}}
 |\nabla u|^{p_i-1}   \rho^{p-1} (\frac{x}{r})|u|\cr 
 &\leq  \frac{pK\Lambda}{r} \sum_{i=1}^N \int_{B_r\backslash B_\frac{r}{2}}
 |\nabla u|^{p_i-1} \rho^\frac{p(p_i-1)}{p_i}  \rho^{ p -\frac{p(p_i-1)}{p_i}-1}|u|\cr
 &=  \frac{pK\Lambda}{r} \sum_{i=1}^N \int_{B_r\backslash B_\frac{r}{2}}
 |\nabla u|^{p_i-1} \rho^\frac{p}{p'_i}  \rho^{\frac{p-p_i}{p_i}}|u|\cr
\end{align*}
with $p'_i = \frac{p_i}{p_i-1}$. Using H\"older's inequality in this last integral, it comes
\begin{equation}\label{2.13}
\lambda \int_{B_r} \sum_{i=1}^N |\nabla u|^{p_i} ~\rho^p(\frac{x}{r})
\leq
 \sum_{i=1}^N \left[\int_{B_r\backslash B_\frac{r}{2}}
 |\nabla u|^{p_i} \rho^p(\frac{x}{r})\right]^{\frac{1}{p'_i}}\left[
\int_{B_r\backslash B_\frac{r}{2}}  \rho^{p-p_i}(\frac{x}{r})
|u|^{p_i}\right]^{\frac{1}{p_i}}\frac{pK\Lambda}{r}.
\end{equation}
Then, using the Young inequality 
\begin{equation}\label{2.14}
\sum_i a_ib_i \leq \varepsilon \sum_i a_i^{p'_i} + C_{\varepsilon}  \sum_i b_i^{p_i}
 \end{equation}
holding for all $\varepsilon>0$, $a_i,  b_i \geq 0$ with some constant
$C_{\varepsilon}>0$, we get
\begin{align*}
\lambda \int_{B_r} \sum_{i=1}^N |\nabla u|^{p_i} ~\rho^p(\frac{x}{r})& \leq\varepsilon \int_{B_r\backslash B_\frac{r}{2}}
\sum_{i=1}^N  |\nabla u|^{p_i} \rho^p(\frac{x}{r})+C_\varepsilon\sum_{i=1}^N \int_{B_r\backslash B_\frac{r}{2}}  \frac{1}{r^{p_i}}\rho^{p-p_i}(\frac{x}{r}) |u|^{p_i}\cr
&\leq\varepsilon \int_{B_r\backslash B_\frac{r}{2}}
\sum_{i=1}^N  |\nabla u|^{p_i} \rho^p(\frac{x}{r})+C_\varepsilon\sum_{i=1}^N \int_{B_r\backslash B_\frac{r}{2}}  \frac{1}{r^{p_i}} |u|^{p_i}.
\end{align*}
Recall that $p\geq p_i$ $\forall i$. Let us assume that 
\begin{equation}\label{2.16}
\sum_{i=1}^N  \frac{1}{r^{p_i}}\int_{B_r\backslash B_\frac{r}{2}}   |u|^{p_i} ~~\text{ is bounded independently of r. }
 \end{equation}
Then, choosing $\varepsilon= \frac{\lambda}{2}$, one derives that 
\begin{displaymath}
\int_{B_\frac{r}{2}} \sum_{i=1}^N |\nabla u|^{p_i} ~~\text{ is bounded independently of r}
\end{displaymath}
and thus, since this integral is nondecreasing in $r$ for every $i$, we
can conclude that
\begin{displaymath}
\lim_{r\to \infty} \int_{B_r} |\nabla u|^{p_i} ~~\text{exists.}
\end{displaymath}
Going back to \eqref{2.13}, applying  \eqref{2.16}, one derives easily that for some constants $C$,
\begin{align*}
\lambda \int_{B_\frac{r}{2}} \sum_{i=1}^N |\nabla u|^{p_i}& \leq C\,
\sum_{i=1}^N \left[\int_{B_r\backslash B_\frac{r}{2}}
 |\nabla u|^{p_i} \right]^{\frac{1}{p'_i}}
\left[\int_{B_r\backslash  B_\frac{r}{2}} \frac{1}{r^{p_i}} |u|^{p_i}
\right]^{\frac{1}{p_i}}\cr
 &\leq C\,\sum_{i=1}^N \left[\int_{B_r} |\nabla u|^{p_i}  - \int_{B_\frac{r}{2}}
 |\nabla u|^{p_i} \right]^{\frac{1}{p'_i}} \to 0 ~~\text{ when } r \to \infty.
\end{align*}
Thus in case \eqref{2.16} holds, $\nabla u =0$ and so, $u$ is
constant. It is easy to see that when $u$ is bounded \eqref{2.16} holds
when $p_i\geq n$ for every $i$.  This completes the proof of the
theorem.
\end{proof}

\begin{remark} Somehow the condition \eqref{2.16} is weaker than $u$
  bounded. Of course, if $b(x,u)$ is not identical equal to 0, the
  constant in Theorem 2.1 vanishes. Also using the structure assumptions
  \eqref{2.7} and \eqref{2.8}, one sees that the theorem above can be
  extended to more general operators. For instance, with a summation in $k$ for
  \begin{equation*}
  -\sum_{i=1}^N \big(\pt_{x_k} a_i^k(x,u)|\nabla u |^{p_i-2}\pt_{x_k}u
\big).   \end{equation*}
In this case the $k$-component of $A(x,u,\xi)$ is given by 
 \begin{equation*}
  -\sum_{i=1}^N a_i^k(x,u)|\xi |^{p_i-2}\xi_k   \end{equation*}
and provided $a_i^k \geq \lambda$ one has 
 \begin{equation*}
 A(x,u,\xi)\cdot\xi \geq \lambda \sum_{i=1}^N|\xi|^{p_i}  \end{equation*}
\eqref{2.8} being easy to establish if the $a_i^k$ are bounded. 

Similarly for instance for the  anisotropic Laplace operator,
 \begin{displaymath}
-\pt_{x_k} \{ a^k(x,u)|\pt_{x_k}u|^{p_k-2}\pt_{x_k} u \} 
  \end{displaymath}
(see, for example,
  \cite{MR2895946,MR2536296,MR4551772}) 
  the $k$-component of $A(x,u,\xi)$ is given by 
  \begin{equation*}
 a^k(x,u)|\xi_k|^{p_k-2}\xi_k   \end{equation*}
and provided $a^k\geq \lambda$  it holds 
 \begin{equation*}
 A(x,u,\xi)\cdot\xi = \sum_{k=1}^n   a^k(x,u)|\xi_k|^{p_k} \geq \lambda \sum_{k=1}^n|\xi_k|^{p_k}. \end{equation*}
The proof of theorem 2.1 follows the same pattern in this case, \eqref{2.13} being replaced by 
\begin{equation*}
\lambda   \sum_{k=1}^n\int_{B_r} |\partial_{x_k} u|^{p_k} ~\rho^p(\frac{x}{r})
\leq\frac{C}{r}\sum_{k=1}^n\left[\int_{B_r\backslash B_\frac{r}{2}}
 |\partial_{x_k}|^{p_k} \rho^p(\frac{x}{r})\right]^{\frac{1}{p'_k}}\left[
\int_{B_r\backslash B_\frac{r}{2}}  \rho^{p-p_k}(\frac{x}{r})
|u|^{p_k}\right]^{\frac{1}{p_k}}
\end{equation*}
and the result holds for $p_k\geq n$, $\forall k$.
\end{remark}

\section{ $p$-Laplacian type operators, ``p" arbitrary}\label{section:2} 
In this section we would like to show that, in case that the lower order
term $b(x,u)$ in equation~\eqref{2.4} is stronger, one can extend
Theorem~\ref{thm:1} to every $1<p<\infty$. To avoid technicalities we
will restrict ourselves to the case of one single operator of $p$-Laplacian
type postponing to the last section (Section~\ref{section:5}) the possible extensions. Thus for
some $p>1$, we suppose that $u$ is a \emph{solution} to 
\begin{equation}\label{3.1a}
-\pt_{x_k}\big(a(x,u)|\nabla u |^{p-2}\pt_{x_k}u\big)+ b(x,u) =0\qquad\text{in $\mathcal{D}'(\R^n)$,}
\end{equation}
i.e. $u \in W^{1,p}_{\ell oc}(\R^n)$ and for every bounded open subset $\Om$ of $\R^n$,
\begin{equation}\label{3.1b}
  \int_\Om a(x,u)|\nabla u |^{p-2}\nabla u\cdot \nabla v
  + b(x,u)v =0\qquad\forall v \in W^{1,p_N}_0(\Om).
\end{equation}
We suppose, of course, that $a(x,u)$ is a Carath\'eodory function satisfying 
\begin{equation}\label{3.2}
\lambda \leq a(x,u) \leq \Lambda ~~\text{ a.e. } x \in \R^n,~\forall u \in \R.
\end{equation}

\begin{theorem}\label{thm:2} 
 Suppose, in addition to \eqref{2.2}, that for some constant $c$ and $r$ large enough
 \begin{equation}\label{3.3}
 b(x,u)u \geq \frac{c}{r^\ell}|u|^p
\end{equation}
with $\ell <p$. Then every bounded solution to \eqref{3.1a}  vanishes.
\end{theorem}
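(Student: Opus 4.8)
The plan is to run the same Caccioppoli/test-function machinery as in Theorem~\ref{thm:1}, but now keeping the lower-order term on the good side instead of discarding it. Writing $A(x,u,\xi)=a(x,u)|\xi|^{p-2}\xi$ (so that $A\cdot\xi\ge\lambda|\xi|^p$ and $|A|\le\Lambda|\xi|^{p-1}$ by \eqref{3.2}), I would test \eqref{3.1b} against $v:=u\,\rho^p(\cdot/r)$ with the cutoff $\rho$ of \eqref{2.9}. Integrating by parts and moving the gradient-of-cutoff contribution to the right, one obtains the estimate
\[
\lambda\int_{B_r}|\nabla u|^p\rho^p(\tfrac{x}{r})+\int_{B_r}b(x,u)u\,\rho^p(\tfrac{x}{r})\le \frac{pK\Lambda}{r}\int_{B_r\setminus B_{r/2}}|\nabla u|^{p-1}\rho^{p-1}(\tfrac{x}{r})|u|.
\]
The point is that, thanks to \eqref{2.2}, the $b$-term on the left is nonnegative, so it may be retained rather than dropped.

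Next I would bound the right-hand side by H\"older's inequality (exponents $p'$ and $p$, absorbing $\rho^{p-1}=(\rho^p)^{1/p'}$) and then Young's inequality \eqref{2.14}, peeling off a term $\varepsilon\int_{B_r}|\nabla u|^p\rho^p$ that is absorbed by the left-hand gradient term upon choosing $\varepsilon=\lambda/2$. This leaves $\tfrac{\lambda}{2}\int_{B_r}|\nabla u|^p\rho^p+\int_{B_r}b(x,u)u\,\rho^p\le \tfrac{C}{r^p}\int_{B_r\setminus B_{r/2}}|u|^p$. Now I invoke \eqref{3.3}: since $\rho=1$ on $B_{r/2}$ and $bu\ge0$ everywhere, the left-hand side dominates $\tfrac{c}{r^\ell}\int_{B_{r/2}}|u|^p$. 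Dropping the nonnegative gradient term and setting $I(r):=\int_{B_r}|u|^p$, I arrive at the recursive inequality
\[
\frac{c}{r^\ell}\,I(r/2)\le \frac{C}{r^p}\bigl(I(r)-I(r/2)\bigr),\qquad\text{equivalently}\qquad I(r/2)\le \frac{C}{c\,r^{p-\ell}+C}\,I(r),
\]
valid for all $r$ large enough.

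The heart of the argument, and the step I expect to be the main obstacle, is converting this into $u\equiv0$. This is exactly where $\ell<p$ is decisive: the multiplier $r^{\ell-p}$ tends to $0$, so halving the radius gains a genuine factor $r^{-(p-\ell)}$. Starting from the polynomial bound $I(r)\le CM^p r^n$ coming from the boundedness $|u|\le M$, I would bootstrap: inserting $I(r)\le C_k r^{\alpha_k}$ (for $r\ge R_0$) into $I(s)\le \tfrac{C}{c}(2s)^{\ell-p}I(2s)$ yields $I(s)\le C_{k+1}s^{\alpha_{k+1}}$ with $\alpha_{k+1}=\alpha_k-(p-\ell)$ on the same range $s\ge R_0$ (the recursion relates $I$ at $s$ to $I$ at the \emph{larger} radius $2s$, so the threshold is preserved). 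Since $p-\ell>0$, after finitely many steps $\alpha_k<0$ while $C_k$ remains finite, whence $I(r)\to0$ as $r\to\infty$; as $I$ is nondecreasing and nonnegative this forces $I\equiv0$, i.e. $u=0$ a.e. Equivalently, one may iterate the ratio estimate directly: bounding the fixed number $I(\rho_0)=I(R/2^m)$ by $I(R)\le CM^pR^n$ times the product of the shrinking factors and letting $m\to\infty$ with $R=2^m\rho_0$, the super-geometric decay $2^{-(p-\ell)m^2/2}$ overwhelms every polynomial and geometric growth, so $I(\rho_0)$ must vanish for each $\rho_0$.
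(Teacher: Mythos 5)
Your proposal starts exactly as the paper does (test function $u\,\rho^p(\cdot/r)$ with $\rho$ as in \eqref{2.9}, Caccioppoli estimate, H\"older), but then diverges: you absorb the gradient term by Young's inequality with $\varepsilon=\lambda/2$, keeping the full factor $r^{-p}$, and you run an exponent-lowering bootstrap on $I(r)=\int_{B_r}|u|^p$. The paper instead applies \eqref{3.3} to the annulus term on the right-hand side of H\"older \emph{before} Young, obtains the recursion $E(r/2)\le C\,r^{-(1-\ell/p)}E(r)$ for the energy $E(r)=\int_{B_r}\lambda|\nabla u|^p+b(x,u)u$, iterates it a fixed number $k$ of times, and combines this with the growth bound $E(r/2)\le Cr^{n-p}$ (obtained from the same Caccioppoli inequality and the boundedness of $u$), letting $r\to\infty$ once $k(1-\ell/p)>n-p$. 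Where it is valid, your iteration scheme is a perfectly good alternative, and per halving it is even stronger, since $p-\ell=p(1-\ell/p)>1-\ell/p$.

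The genuine gap is the sentence ``since $\rho=1$ on $B_{r/2}$ and $bu\ge0$ everywhere, the left-hand side dominates $\frac{c}{r^\ell}\int_{B_{r/2}}|u|^p$''. This applies \eqref{3.3} on the \emph{whole} ball $B_{r/2}$, down to the origin. But \eqref{3.3} is a condition for $|x|$ large: that is how the paper uses it (only on the annulus $B_r\setminus B_{r/2}$, where $|x|\ge r/2$), and it is the reading made explicit in the analogous hypothesis of Section~\ref{section:5} (``for $x$ large enough''). Indeed, if \eqref{3.3} held for a.e.\ $x\in\R^n$ for each large $r$, then taking $r$ equal to the threshold radius $R_0$ would give $b(x,u)u\ge (c/R_0^\ell)|u|^p$ everywhere, i.e.\ \eqref{3.18}, and the parameter $\ell$ would play no role. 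Under the intended reading your recursion acquires an additive constant,
\[
\frac{c}{r^\ell}\bigl(I(r/2)-I(R_0)\bigr)\le\frac{C}{r^p}\bigl(I(r)-I(r/2)\bigr),
\]
and the bootstrap then only yields $u=0$ outside $B_{R_0}$, not $u\equiv0$. Two repairs are possible: (i) apply \eqref{3.3} on the annulus, i.e.\ to the right-hand side of your absorbed inequality, which gives $F(r/2)\le \frac{C}{c}\,r^{\ell-p}F(r)$ with $F(r)=\int_{B_r}b(x,u)u$; since $F(r)\le Cr^n$ ($b$ and $u$ bounded), your bootstrap yields $F\equiv0$, the Caccioppoli estimate then yields $\nabla u\equiv0$, and \eqref{3.3} at large $|x|$ forces the resulting constant to vanish; or (ii) keep your version, conclude (with a little extra care in the iteration) that $u=0$ off $B_{R_0}$, observe that then $u\in W^{1,p}_0(B_{2R_0})$, and test \eqref{3.1b} with $v=u$: by \eqref{3.2} and \eqref{2.2} this gives $\lambda\int|\nabla u|^p\le0$, hence $u\equiv0$.
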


\begin{proof} 
  Let $\rho$ be a function satisfying \eqref{2.9}. Taking as test function in \eqref{3.1b} 
  \begin{displaymath}
    v= u\, \rho^p(\frac{\cdot}{r}),
  \end{displaymath}
  we get
  \begin{displaymath}
    \int_{B_r} a(x,u)|\nabla u |^{p-2}\nabla u\cdot \nabla\{ u
    \rho^p(\frac{x}{r})\}
    + b(x,u)u \rho^p(\frac{x}{r})=0. 
\end{displaymath}
This implies easily 
\begin{equation}\label{3.6}
\int_\Om a(x,u)|\nabla u |^p\rho^p(\frac{x}{r}) + b(x,u)u \rho^p(\frac{x}{r})= - p \int_\Om a(x,u)|\nabla u |^{p-2}\nabla u \cdot \nabla \{\rho(\frac{x}{r})\} \rho^{p-1} u.
\end{equation}
Arguing as in the previous section, one derives (see \eqref{3.2}, \eqref{3.3})
\begin{equation}\label{3.7}
 \int_{B_r}\lambda |\nabla u |^{p}\rho^p +b(x,u)u \rho^p\leq \frac{pK\Lambda}{r} \int_{B_r\backslash B_\frac{r}{2}} |\nabla u |^{p-1}\rho^{p-1} |u|.
\end{equation}
Applying H\"older's inequality, it comes 
\begin{equation}\label{3.8}
\begin{aligned}
 \int_{B_r} \lambda|\nabla u |^{p}\rho^p + b(x,u)u\rho^p&\le
 \frac{pK\Lambda}{r}\, \left[\int_{B_r\backslash B_\frac{r}{2}} |\nabla u
   |^{p}\rho^{p}\right]^{\frac{1}{p'}} \left[\int_{B_r\backslash
     B_\frac{r}{2}}|u|^p\right]^{\frac{1}{p}}\cr
 &\le \frac{pK\Lambda}{r}\, \left[\int_{B_r\backslash B_\frac{r}{2}}
   |\nabla u |^{p}\rho^{p}\right]^{\frac{1}{p'}} 
 \left[\int_{B_r\backslash B_\frac{r}{2}}\frac{r^\ell}{c}b(x,u)u\right]^{\frac{1}{p}}\cr
 &\le \frac{pK\Lambda}{c^\frac{1}{p}r^{1-\frac{\ell}{p}}}\, 
 \left[\int_{B_r\backslash B_\frac{r}{2}} |\nabla u
   |^{p}\rho^{p}\right]^{\frac{1}{p'}} 
 \left[\int_{B_r\backslash B_\frac{r}{2}}b(x,u)u\right]^{\frac{1}{p}}.\cr
 \end{aligned}
\end{equation}
Using now the Young inequality 
\begin{displaymath}
ab\leq \frac{1}{p'}a^{p'} +\frac{1}{p}a^{p},~~~\forall\,a,b \geq 0,
\end{displaymath}
we get 
\begin{displaymath}
 \int_{B_\frac{r}{2}} \lambda |\nabla u |^{p} + b(x,u)u \leq \frac{pK\Lambda}{\lambda p'c^\frac{1}{p}r^{1-\frac{\ell}{p}}}\int_{B_r\backslash B_\frac{r}{2}}\lambda |\nabla u |^{p}\rho^{p} +
\frac{pK\Lambda}{ pc^\frac{1}{p}r^{1-\frac{\ell}{p}}}\int_{B_r\backslash B_\frac{r}{2}} b(x,u)u.
\end{displaymath}
Thus, for some constant $C>0$,
\begin{displaymath}
\int_{B_\frac{r}{2}} \lambda|\nabla u |^{p} + b(x,u)u\ 
\leq  \frac{C}{r^{1-\frac{\ell}{p}}}\int_{B_r} \lambda|\nabla u |^{p} + b(x,u)u.
\end{displaymath}
Iterating this formula, one derives
\begin{equation}\label{3.12}
\int_{B_\frac{r}{2^{k+1}}} \lambda |\nabla u |^{p} + b(x,u)u \leq \frac{C^k}{r^{k({1-\frac{\ell}{p}})}} \int_{B_\frac{r}{2}}  \lambda |\nabla u |^{p} + b(x,u)u.
\end{equation}
Going back to \eqref{3.8} we have 
\begin{align*}
 \int_{B_r} \lambda|\nabla u |^{p}\rho^p + b(x,u)u\rho^p
 &\le \frac{pK\Lambda}{r}\, 
 \left[\int_{B_r\backslash B_\frac{r}{2}} |\nabla u
   |^{p}\rho^{p}\right]^{\frac{1}{p'}} 
 \left[\int_{B_r\backslash B_\frac{r}{2}}|u|^p\right]^{\frac{1}{p}}\cr
 &\le \frac{pK\Lambda}{r}\frac{1}{\lambda^\frac{1}{p'}}\,\left[\int_{B_r\backslash B_\frac{r}{2}}\lambda
  |\nabla u |^{p}\rho^{p}\right]^{\frac{1}{p'}} 
 \left[\int_{B_r\backslash B_\frac{r}{2}}|u|^p\right]^{\frac{1}{p}}\cr
  &\le \frac{pK\Lambda}{r}\frac{1}{\lambda^\frac{1}{p'}}\, 
 \left[\int_{B_r\backslash B_\frac{r}{2}} \lambda|\nabla u |^{p}\rho^p +
    b(x,u)u\rho^p\right]^{\frac{1}{p'}} 
 \left[\int_{B_r\backslash B_\frac{r}{2}}|u|^p\right]^{\frac{1}{p}}
\end{align*}
and thus for some constant $C>0$,
\begin{displaymath}
  \left[\int_{B_r}  \lambda|\nabla u |^{p}\rho^p +
    b(x,u)u\rho^p\right]^{\frac{1}{p}} 
 \le \frac{C}{r}\left[\int_{B_r\backslash B_\frac{r}{2}}|u|^p\right]^{\frac{1}{p}}
\end{displaymath}
which leads  to 
\begin{displaymath}
\int_{B_\frac{r}{2}} \lambda|\nabla u |^{p} + b(x,u)u\ \le
\left(\frac{C}{r}\right)^p\,\int_{B_r\backslash B_\frac{r}{2}}|u|^p.
\end{displaymath}
If $u$ is supposed to be uniformly bounded, then one gets 
\begin{equation}\label{3.16}
\int_{B_\frac{r}{2}} \lambda|\nabla u |^{p} + b(x,u)u\leq Cr^{n-p}.
\end{equation}
for some other constant $C$. From \eqref{3.12}, we derive then 
\begin{displaymath}
\int_{B_\frac{r}{2^{k+1}}} \lambda |\nabla u |^{p} + b(x,u)u \le 
\frac{C}{r^{k({1-\frac{\ell}{p}})}}r^{n-p} \to 0  \text{ when }r\to \infty  \text{ and  } k({1-\frac{\ell}{p}}) >p-n.
\end{displaymath}
This completes the proof of Theorem~\ref{thm:2}.
\end{proof}

\begin{remark} From \eqref{3.16} one can get the result for $p >n$. Note also that \eqref{3.3} holds with $\ell=0$ when one has 
 \begin{equation}\label{3.18}
 b(x,u)u \geq c |u|^p \qquad\text{for a.e. $x\in\R^d$ and every $u\in \R$.}
\end{equation}
\end{remark}

\section{Existence of a nontrivial solution}\label{section:4}

In this section, we would like to construct a nontrivial bounded
solution to the equation
\begin{equation}
  \label{4.0a}
  -\Delta_p u + b\,u=0\qquad\text{in $\mathcal{D}'(\R^n)$,}
\end{equation}
when $b=b(x)$ is nonnegative. Here, a function $u$ is call a
\emph{solution} to~\eqref{4.0a} if $u \in W^{1,p}_{\ell oc}(\R^n)$ and
for every open bounded subset $\Om\subseteq \R^n$,
\begin{equation}\label{4.0}
\int_\Om |\nabla u |^{p-2}\nabla u \cdot \nabla v+ b(x)|u|^{p-2}uv =0 ~~\forall v \in W^{1,p}_0(\Om). 
\end{equation}

Recall that $B_k$ denotes the ball of center $0$ and radius $k$. Then,
for every $k\in \N$, there exists a unique solution $u_k$ to the variational inequality 
 \begin{equation}\label{4.1}
\begin{cases}
u_k \in K = \{ v \in W^{1,p}(B_k) : v=1 \text{ on } \pt B_k \}, \cr
\cr
\displaystyle
\int_{B_k}  |\nabla u_k |^{p-2}\nabla u_k \cdot \nabla (v -u_k) + b(x)|u_k |^{p-2}u_k(v -u_k)  \geq 0 ~~\forall v \in K. \cr
\end{cases}
\end{equation}
We refer, for instance, to \cite{KS}, \cite{C4} or to the
Remark~\ref{rem:3} below.

\vskip .5 cm 1. Claim: $0\le u_k \le 1$ on $B_k$ \vskip .5 cm Recall that $w^+(x):=\max\{0,w(x)\}$ denotes the
positive part of a function $w$ and $w^{-}:=(-w)^+$ the negative
part. Then, taking $v= u_k^+$ as a test function in \eqref{4.1} and by using
that $u_k^+-u_k=u_k^-$, it comes
 \begin{displaymath}
\int_{B_k}  |\nabla u_k |^{p-2}\nabla u_k \cdot \nabla u_k^-+ b|u_k |^{p-2}u_ku_k^-  = -\int_{B_k}  |\nabla u_k^- |^{p-2}\nabla u_k^-\cdot \nabla u_k^-+ b |u_k |^{p-2}u_k^-u_k^- \ge 0, 
\end{displaymath}
from where we can conclude that
 \begin{displaymath}
\int_{B_k}  |\nabla u_k^- |^{p}+ b |u_k^-|^p \le 0. 
\end{displaymath}
Thus, $u_k^- =0$ on $B_k$, which implies that $u_k\ge 0$ on $B_k$.
\vskip .5 cm
Taking $v = u_k \pm (u_k-1)^+$ in \eqref{4.1}, one gets 
 \begin{displaymath}
\int_{B_k}  |\nabla u_k |^{p-2}\nabla u_k \cdot \nabla (u_k -1)^+ + b|u_k |^{p-2}u_k (u_k-1)^+  =  0 
\end{displaymath}
and hence, 
 \begin{displaymath}
\int_{B_k}  |\nabla u_k |^{p-2}\nabla ( u_k -1) \cdot \nabla (u_k -1)^+ = - \int_{B_k}  b|u_k |^{p-2}u_k (u_k-1)^+  \le 0 . 
\end{displaymath}
Thus $(u_k-1)^+ = 0$, i.e. $u_k\le 1$.
\vskip .5 cm

2. Claim: $u_{k+1} \leq u_k$ on $B_k$
\vskip .5 cm
Clearly $(u_{k+1}  -u_k)^+ \in W^{1,p}_0(B_k)$. We now suppose that this
function is extended by $0$ on $B_{k+1}$. Taking $v= u_k \pm (u_{k+1}
-u_k)^+$ in \eqref{4.1}, we get that 
 \begin{displaymath}
\int_{B_k}  |\nabla u_k |^{p-2}\nabla u_k   \cdot \nabla(u_{k+1}  -u_k)^+ +   b |u_k |^{p-2}u_k (u_{k+1}  -u_k)^+  =  0   . 
\end{displaymath}
Similarly, taking $v= u_{k+1} \pm (u_{k+1}  -u_k)^+$ in \eqref{4.1} gives
 \begin{displaymath}
\int_{B_k}  |\nabla u_{k+1}|^{p-2}\nabla u_{k+1}   \cdot \nabla(u_{k+1}
-u_k)^+ +   b | u_{k+1}|^{p-2} u_{k+1} (u_{k+1}  -u_k)^+  =  0   . 
\end{displaymath}
By subtraction, it comes 
 \begin{displaymath}
 \begin{aligned}
\int_{B_k} \{ |\nabla u_{k+1}|^{p-2}\nabla u_{k+1} -  &|\nabla u_k |^{p-2}\nabla u_k\} \cdot \nabla(u_{k+1}  -u_k)^+ \cr
&+   b\{ | u_{k+1}|^{p-2} u_{k+1} -  |u_k |^{p-2}u_k\}(u_{k+1}  -u_k)^+  =  0   . 
 \end{aligned}
 \end{displaymath}
Thus for some constant $c_p>0$, it comes (see, for example, \cite{C4})
 \begin{displaymath}
c_p \int_{B_k} \big( |\nabla u_{k+1}| +|\nabla u_k| \big)^{p-2}| \nabla(u_{k+1}  -u_k)^+ |^2\le  0, 
\end{displaymath}
implying that $(u_{k+1}  -u_k)^+ =0$ on $B_k$, which is $u_{k+1}  \le
u_k$ on $B_k$.
\vskip .5 cm

From the Claim 1. and Claim 2., we derive that
\begin{equation}
  \label{eq:pointwise-limit}
  u_k(x) \to u(x)\qquad\text{pointwise for a.e. $x\in \R^n$,} 
\end{equation}
 where $u : \R^n\to \R$ is a function satisfying
 \begin{displaymath}
   0\le u\le 1\qquad\text{on $\R^n$}.   
\end{displaymath}
\vskip .5 cm

3. Claim: If $b$ is radially symmetric, so is $u_k$ and $u$.
\vskip .5 cm
If $R=(R_{j,k})$ is an orthogonal transformation, then one has with the summation convention
 \begin{displaymath}
\begin{aligned}
\nabla \{v(Rx)\} &= (\pt_{y_j} v(Rx)  \pt_{x_i} R_{j,k}x_k) \cr
&=(R_{j,i}\pt_{y_j} v(Rx)) = R^T\{\nabla v\} (Rx).
 \end{aligned}
\end{displaymath}
Thus one has, by a change of variable 
 \begin{displaymath}
\begin{aligned}
\int_{B_k}  |\nabla \{u_k(Rx)\} |^{p-2}\nabla \{u_k (Rx)\}& \cdot \nabla \{(v(Rx) -u_k(Rx))\} \cr
&+ b
|u_k (Rx)|^{p-2}u_k(Rx)(v (Rx)-u_k(Rx))  \geq 0  \cr
 \end{aligned}
\end{displaymath}
for any $v \in W^{1,p}_0(B_k), ~v=1 $ on $\pt B_k$. Choosing $v(R^Tx)$ we see, by uniqueness of $u_k$  that 
 \begin{displaymath}
u_k(Rx) = u_k(x)
\end{displaymath}
for any orthogonal transformation $R$.

\begin{remark}\label{rem:3} 
 Taking $v=u_k\pm \varphi$ for $\varphi \in W^{1,p}_0(B_k)$ in \eqref{4.1}, one sees that $u_k$ satisfies 
 \begin{equation}\label{4.14}
u_k\in K\quad\text{ and }\quad
\int_{B_k}  |\nabla u_k|^{p-2}\nabla u_k \cdot \nabla \varphi  + b| u_k|^{p-2} u_k\varphi = 0 ~~\forall \varphi \in W^{1,p}_0(B_k),
\end{equation}
that is, $u_k$ is a \emph{weak solution} of the nonlinear Dirichlet problem
\begin{displaymath}
\begin{alignedat}{2}
	   -\Delta_{p}u_k+b\,|u_k|^{p-2}u_k&=0\quad && 
	\text{in $B_k$,}\\
	u_k&=1\quad && \text{on $\partial B_k$.}
       \end{alignedat}
\end{displaymath}
Note that $u_k$ is also the unique minimiser on $K$ to
 \begin{displaymath}
   J(v) = \int_{B_k}  |\nabla v|^{p} + b|v|^p. 
\end{displaymath}
\end{remark}

From now on, we suppose that 
 \begin{equation}\label{4.16}
 \begin{aligned}
&b \text{ is radially symmetric with compact support, i.e. } \cr
&~~~~~~~~~~~~~~b(x)=b(|x|) =0\qquad \text{ for all} ~|x|=r \ge r_0.\cr
\end{aligned}
\end{equation}
Since the function $1\in K$, one has then 
\begin{displaymath}
 \int_{B_k}  |\nabla u_k|^{p} + b|u_k|^p=J(u_k) \le J(1) = \int_{\R^n} b < +\infty.
 \end{displaymath}
Thus, up to a subsequence, 
\begin{equation}\label{4.18}
 \nabla u_k \rightharpoonup  \nabla u \text{ in } L^p(\Om)
 \end{equation}
for every bounded subdomain $\Om$ of $\R^n$.
\vskip .5 cm

4. Differential equation satisfied by $u_k$ and $u$.
\vskip .5 cm
If $u_k=u_k(r)$, then 
\begin{displaymath}
 \nabla u_k = u'_k(r)\nabla r = u'_k(r)\frac{x}{r}\quad\text{ and
 }\quad | \nabla u_k| = |u'_k(r)|.
 \end{displaymath}
From this, it follows that 
 \begin{displaymath}
 \begin{aligned}
\nabla \cdot (| \nabla u_k |^{p-2} \nabla u_k ) & =  \pt_{x_i}(|u'_k|^{p-2} u'_k \frac{x_i}{r})\cr
&=|u'_k|^{p-2} u'_k \pt_{x_i}\{\frac{x_i}{r}\} + (|u'_k|^{p-2} u'_k )'\frac{x_i}{r}\frac{x_i}{r}\cr
&=|u'_k|^{p-2} u'_k (\frac{n}{r}) + |u'_k|^{p-2} u'_k  x_i \big(-\frac{1}{r^2}\big)\frac{x_i}{r}\ + (|u'_k|^{p-2} u'_k )'\cr
&= |u'_k|^{p-2} u'_k (\frac{n-1}{r}) +(|u'_k|^{p-2} u'_k )'\cr
&= \frac{1}{r^{n-1}}\Big(  |u'_k|^{p-2} u'_k (n-1) r^{n-2} + r^{n-1}(|u'_k|^{p-2} u'_k )'\Big)\cr
&= \frac{1}{r^{n-1}} (|u'_k|^{p-2} u'_k  r^{n-1})' .\cr
 \end{aligned}
\end{displaymath}
Thus from \eqref{4.14}, one derives that $u_k$ satisfies
\begin{align*}
  \frac{1}{r^{n-1}} (|u'_k|^{p-2} u'_k  r^{n-1})' &= b|u_k|^{p-2}
  u_k\qquad\text{for $0<r<k$,}\\
\intertext{which is equivalent to}
 (|u'_k|^{p-2} u'_k  r^{n-1})' &= r^{n-1}b|u_k|^{p-2} u_k\qquad\text{for $0<r<k$,}\\
\intertext{and again, equivalent to}
|u'_k(r)|^{p-2} u'_k(r)   &=\frac{1}{r^{n-1}}\int_0^r s^{n-1}b |u_k|^{p-2} u_k ds\qquad\text{for $0<r<k$.}
\end{align*}
Setting $\Psi(x) = |x|^{p-2} x$ for $x\in \R$, then $\psi$ is bijective on $\R$ and
its inverse is $\Psi^{-1}(x) = |x|^{\frac{1}{p-1}}\sign x$, where $\sign x$
denotes the sign of $x$. One gets 
\begin{equation}\label{4.23}
u'_k= \Psi^{-1}(\frac{1}{r^{n-1}}\int_0^r s^{n-1}b |u_k|^{p-2} u_k ds)\qquad\text{for $0<r<k$.}
 \end{equation}
From \eqref{4.18}, one has up to a subsequence still labelled by $k$
\begin{equation}\label{4.24}
\nabla u_k = u'_k \frac{x}{r}\rightharpoonup u' \frac{x}{r} \text{ in } L^p(\Om)
 \end{equation}
 for every open and bounded subset $\Om\subseteq \R^n$. Thus and by
 using~\eqref{eq:pointwise-limit}, multiplying~\eqref{4.23} with $x/r$
 for $r>0$ and subsequently passing to the limit, we arrive
 to
\begin{align*}
u'(r)\frac{x}{r}&= \frac{x}{r}\Psi^{-1}(\frac{1}{r^{n-1}}\int_0^r
                  s^{n-1}b |u|^{p-2} u ds)\qquad\text{for $r>0$,}\\
\intertext{which is equivalent to}
u'(r)&= \Psi^{-1}(\frac{1}{r^{n-1}}\int_0^r s^{n-1}b |u|^{p-2} u
       ds)\qquad\text{for $r>0$,}\\
\intertext{and}
\Psi(u') = |u'|^{p-2} u' &= \frac{1}{r^{n-1}}\int_0^r s^{n-1}b |u|^{p-2} u ds\qquad\text{for $r>0$.}
 \end{align*}
Multiplying the last equation by $r^{n-1}$ and subsequently,
differentiating it, shows that $u$ satisfies
\begin{displaymath}
 - \frac{1}{r^{n-1}} (|u'|^{p-2} u'  r^{n-1})' + b|u|^{p-2} u=0\qquad\text{in $(0,\infty)$,}
\end{displaymath}
that is, $u$ satisfies the same equation as $u_k$ in all $\R^n$.\medskip

We would like to show now that $u$ is nontrivial.

\vskip .5 cm
5. The limit of $u_k$ cannot be identically $0$, that is, $u$ is nontrivial.
\vskip .5 cm

Due to the definition of $b$, one has that 
\begin{displaymath}
(|u'_k|^{p-2} u'_k  r^{n-1})'=0 \text{ for } r \geq r_0.
 \end{displaymath}
Thus 
\begin{displaymath}
|u'_k|^{p-2} u'_k  r^{n-1}= C_k \text{ for } r \geq r_0.
 \end{displaymath}
where $C_k$ is some constant. Thus for $r \geq r_0$ one has 
\begin{displaymath}
u'_k  = \Psi^{-1}(\frac{C_k }{r^{n-1}}) = |C_k|^\frac{1}{p-1} \sign C_k \frac{1}{r^\frac{n-1}{p-1}} .
 \end{displaymath}
Integrating between $r_0$ and $r$, we get 
\begin{equation}\label{4.30}
u_k(r) - u_k(r_0)   =  |C_k|^\frac{1}{p-1} \sign C_k\,\int_{r_0}^r\frac{1}{r^\frac{n-1}{p-1}}.
 \end{equation}

 Now, if $u_k(r) \to 0$ pointwise, \eqref{4.30}
 implies that $C_k\to 0$. On the other hand, choosing $r=k$ in \eqref{4.30} gives that
\begin{equation}\label{4.31}
1- u_k(r_0)   =  |C_k|^\frac{1}{p-1} \sign C_k\,\int_{r_0}^k\frac{1}{r^\frac{n-1}{p-1}}
 \end{equation}
for every $k\geq r_0$. If $n>p$, then the integral above converges 
 and so,  we arrive to a contradiction when we send $k \to
\infty$ in \eqref{4.31}. Thus we have proved

\begin{theorem}\label{thm:3} 
  In the case $n>p$ ($n>2$ in the case of the Laplacian), one can find
  $b$ satisfying \eqref{4.16} such that \eqref{4.0a} admits a nontrivial
  bounded solution.
\end{theorem}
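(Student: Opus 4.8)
The plan is to realize the desired solution as a monotone limit of the obstacle solutions $u_k$ constructed on the exhausting balls $B_k$, and to detect its nontriviality through the explicit radial profile outside the support of $b$. First I would fix any $b$ satisfying \eqref{4.16} that is not identically zero, and invoke the variational problem \eqref{4.1} to produce the sequence $u_k$. The bounds $0\le u_k\le 1$ (Claim~1) and the monotonicity $u_{k+1}\le u_k$ on $B_k$ (Claim~2) already guarantee that the pointwise limit $u$ in \eqref{eq:pointwise-limit} exists and satisfies $0\le u\le 1$ on $\R^n$; in particular the candidate solution is automatically bounded, so boundedness requires no further work.

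Next I would verify that $u$ is genuinely a solution of \eqref{4.0a}. The delicate point is that the weak convergence $\nabla u_k\rightharpoonup\nabla u$ in \eqref{4.18} does not by itself pass to the limit in the nonlinear flux $|\nabla u_k|^{p-2}\nabla u_k$. I would circumvent this by exploiting radial symmetry (Claim~3): the integrated form \eqref{4.23} expresses $u_k'$ through $\Psi^{-1}$ applied to $r^{1-n}\int_0^r s^{n-1}b\,|u_k|^{p-2}u_k\,ds$, and since $|u_k|\le 1$ and $b$ has compact support, the integrand is dominated uniformly in $k$. Dominated convergence, together with the pointwise convergence $u_k\to u$, then lets me pass to the limit inside the integral and recover the same integrated ODE for $u$; multiplying by $r^{n-1}$ and differentiating back shows $u$ solves the radial equation on $(0,\infty)$, i.e. \eqref{4.0a} on all of $\R^n$.

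The crux is the nontriviality, and this is where the hypothesis $n>p$ is indispensable. Outside the support of $b$ each $u_k$ is radially $p$-harmonic, so $|u_k'|^{p-2}u_k'\,r^{n-1}\equiv C_k$ for $r\ge r_0$, and integration yields \eqref{4.30}. The exponent governing the tail, $\tfrac{n-1}{p-1}$, exceeds $1$ precisely when $n>p$, so the integral $\int_{r_0}^{\infty}s^{-(n-1)/(p-1)}\,ds$ converges. I would then argue by contradiction: if $u\equiv 0$, evaluating \eqref{4.30} at any fixed $r>r_0$ forces $|C_k|^{1/(p-1)}\sign C_k\to 0$; but evaluating at the boundary $r=k$, where $u_k=1$, gives \eqref{4.31}, whose right-hand side then tends to $0$ because the tail integral stays bounded, while the left-hand side tends to $1$. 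This contradiction shows $u\not\equiv 0$. I expect this last step to be the main obstacle: it is really the statement that a point has positive $p$-capacity exactly when $p<n$, which is the mechanism allowing a nonconstant bounded solution to survive in the limit. For $p\ge n$ the tail integral diverges and the decaying radial profile is unavailable, so the same computation would instead force $u$ to remain the constant $1$, fully consistent with Theorem~\ref{thm:1}.
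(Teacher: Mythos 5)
Your proposal is correct and follows essentially the same route as the paper: the monotone limit of the variational solutions $u_k$ on $B_k$ (Claims 1--3), identification of the limiting radial equation via the integrated ODE \eqref{4.23}, and the contradiction argument from \eqref{4.30}--\eqref{4.31} using the convergence of $\int_{r_0}^{\infty} r^{-\frac{n-1}{p-1}}\,dr$ when $n>p$. The only cosmetic difference is that you identify the limit equation by dominated convergence in \eqref{4.23} rather than through the weak convergence \eqref{4.24} used in the paper's Step 4, which is an equally valid way to carry out that step.
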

\vskip .5 cm
\section{Concluding remark}\label{section:5}

We would like to show briefly here how Theorem~\ref{thm:2} can be
extended in the case of several $p$-Laplacian type operators. Suppose
that $u$ is a solution to \eqref{2.4}.  Arguing as in \eqref{3.6} and
\eqref{3.7},  one gets that
\begin{equation}\label{5.1}
 \int_{B_r}\lambda \sum_{i=1}^N |\nabla u|^{p_i} ~\rho^p(\frac{x}{r}) + b(x,u)u\rho^p(\frac{x}{r})\leq  \frac{pK\Lambda}{r}  \int_{B_r\backslash B_\frac{r}{2}}
 \sum_{i=1}^N|\nabla u|^{p_i-1} \rho^\frac{p}{p'_i}  \rho^{\frac{p-p_i}{p_i}}|u|.
\end{equation}
Using the H\"older  inequality we derive 
\begin{displaymath}
\begin{aligned}
 \int_{B_r}\lambda \sum_{i=1}^N |\nabla u|^{p_i} ~\rho^p(\frac{x}{r}) &+ b(x,u)u\rho^p(\frac{x}{r})\cr 
 &\leq  \frac{pK\Lambda}{r}  \sum_{i=1}^N \Big( \int_{B_r\backslash B_\frac{r}{2}}
 |\nabla u|^{p_i} \rho^p\Big)^\frac{1}{p'_i}\Big(   \int_{B_r\backslash B_\frac{r}{2}} \rho^{p-p_i}|u|^{p_i}\Big)^\frac{1}{p_i}.\cr
\end{aligned}
\end{displaymath}
Assuming then for $x$ large enough and for all $i$
\begin{displaymath}
 b(x,u)u \geq \frac{c}{r^\ell}|u|^{p_i},~~c>0, \ell <p_1 \leq p_i
 \end{displaymath}
we get 
\begin{displaymath}
\begin{aligned}
 \int_{B_r}\lambda \sum_{i=1}^N |\nabla u|^{p_i} ~\rho^p(\frac{x}{r}) &+ b(x,u)u\rho^p(\frac{x}{r})\cr 
 &\leq  \frac{pK\Lambda}{r^{1-\frac{\ell}{p_1}}}  \sum_{i=1}^N \Big( \int_{B_r\backslash B_\frac{r}{2}}
 |\nabla u|^{p_i} \rho^p\Big)^\frac{1}{p'_i}\Big(   \int_{B_r\backslash B_\frac{r}{2}} \rho^{p-p_i}\frac{1}{c} b(x,u)u \Big)^\frac{1}{p_i}.\cr
\end{aligned}
\end{displaymath}
Then applying the Young inequality 
\begin{displaymath}
ab \leq \frac{1}{p'_i} a^{p'_i} + \frac{1}{p_i} b^{p_i},~~a, b \geq 0 
\end{displaymath}
it comes easily for some constant $C$
\begin{displaymath}
 \int_{B_r}\lambda \sum_{i=1}^N |\nabla u|^{p_i} ~\rho^p(\frac{x}{r}) + b(x,u)u\rho^p(\frac{x}{r})\leq \frac{C}{r^{1-\frac{\ell}{p_1}}} \int_{B_r\backslash B_\frac{r}{2}}
 \sum_{i=1}^N \lambda|\nabla u|^{p_i} \rho^{p} +  \rho^{p-p_i}b(x,u)u
\end{displaymath}
and thus, if $p\geq p_i $, for some constant $C$, we get
\begin{displaymath}
 \int_{B_\frac{r}{2}}\lambda \sum_{i=1}^N |\nabla u|^{p_i}  + b(x,u)u\leq  \frac{C}{r^{1-\frac{\ell}{p_1}}}  \int_{B_r}
 \lambda  \sum_{i=1}^N |\nabla u|^{p_i}  +b(x,u)u.
\end{displaymath}

Iterating this formula, one gets
\begin{equation}\label{5.8}
 \int_{B_\frac{r}{2^{k+1}}}\lambda \sum_{i=1}^N |\nabla u|^{p_i}  + b(x,u)u\leq \frac{C^k}{r^{k(1-\frac{\ell}{p_1})}}   \int_{B_\frac{r}{2}}
 \lambda \sum_{i=1}^N  |\nabla u|^{p_i}  + b(x.u)u.
\end{equation}
Going back to \eqref{5.1} and using \eqref{2.14}  (taking $\varepsilon =
\frac{1}{2}$), we obtain that 
\begin{displaymath}
 \int_{B_r}\lambda \sum_{i=1}^N |\nabla u|^{p_i}\rho^p  + b(x,u)u\rho^p \leq  
 \varepsilon  \int_{B_r}\lambda \sum_{i=1}^N |\nabla u|^{p_i}\rho^p +
 C_\varepsilon  \int_{B_r} \sum_{i=1}^N  \frac{|u|^{p_i}}{r^{p_i}}
 \end{displaymath}
and
\begin{displaymath}
 \int_{B_\frac{r}{2}}\lambda \sum_{i=1}^N |\nabla u|^{p_i}\  + b(x,u)u \leq  
2 \,C_\varepsilon  \int_{B_r} \sum_{i=1}^N  \frac{|u|^{p_i}}{r^{p_i}}.
 \end{displaymath}
If $u$ is bounded, this leads to 
\begin{displaymath}
 \int_{B_\frac{r}{2}}\lambda \sum_{i=1}^N |\nabla u|^{p_i}\  + b(x,u)u \leq  
C  \sum_{i=1}^N r^{n-p_i}.
 \end{displaymath}
By \eqref{5.8}, it follows that
\begin{displaymath}
 \int_{B_\frac{r}{2^{k+1}}}\lambda \sum_{i=1}^N |\nabla u|^{p_i}  + b(x,u)u
 \leq  C \sum_{i=1}^N \frac{1}{r^{k(1-\frac{\ell}{p_1})+p_i-n}} \to 0 \text{ for } k(1-\frac{\ell}{p_1})> n-p_i.
\end{displaymath}
This completes the proof in this case.

\vskip .5 cm
{\bf Acknowledgements} {The second author's research was partially supported by the two
  Australian Research Council grants DP200101065 and DP220100067.}

\vskip .5 cm


\end{document}